\documentclass{article}
\usepackage{amsmath, amsthm, amssymb}
\usepackage[margin=0.8in]{geometry}
\usepackage{parskip}
\usepackage{comment}
\usepackage{hyperref}
\usepackage{graphicx, cleveref} 
\usepackage[dvipsnames]{xcolor}
\usepackage{subcaption, tikz}
\usepackage{wasysym}

\newcommand{\dic}{\vec\chi}
\newcommand{\diomega}{\vec\omega}

\renewcommand{\le}{\leqslant}

\newcommand{\itv}[1]{\tilde{[}#1\tilde{]}}
\newcommand{\intI}{\mathcal{I}}
\newcommand{\arrowL}{\overrightarrow{L}}
\newcommand{\vomega}{\vec{\omega}}
\newcommand{\vchi}{\vec{\chi}}

\newtheorem{lemma}{Lemma}
\newtheorem{cor}[lemma]{Corollary}
\newtheorem{corollary}[lemma]{Corollary}
\newtheorem{question}[lemma]{Question}

\newtheorem{theorem}[lemma]{Theorem}
\newtheorem{conj}[lemma]{Conjecture}
\newtheorem{definition}[lemma]{Definition}

\newcommand{\stef}[1]{{\color{blue}Stéphan: #1}}

\newcommand{\sophie}[1]{\textcolor{Emerald}{Sophie: #1}}

\title{Decomposing tournaments into comparability graphs}

\usepackage{authblk}

\author[4]{Pierre Aboulker\thanks{This research was supported by the ANR project GODASse ANR-24-CE48-4377 and by the group Casino/ENS Chair on Algorithmics and Machine Learning.}}
\author[1]{Logan Crew}
\author[6]{Julien Duron}
\author[1]{Xinyue Fan}
\author[3]{Hugo Jacob}
\author[4]{Rémy Kimbrough}
\author[1]{Hidde Koerts}
\author[2]{Benjamin Moore}
\author[1]{Sophie Spirkl\thanks{We acknowledge the support of the Natural Sciences and Engineering Research Council of Canada (NSERC), [funding reference numbers RGPIN-2020-03912 and RGPIN-2022-03093].
Cette recherche a \'et\'e financ\'ee par le Conseil de recherches en sciences naturelles et en g\'enie du Canada (CRSNG), [num\'eros de r\'ef\'erence RGPIN-2020-03912 et RGPIN-2022-03093]. This project was funded in part by the Government of Ontario. Benjamin Moore also acknowledges the support of NSERC grant RGPIN-2025-07125.  This research was conducted while Spirkl was an Alfred P. Sloan Fellow. This research was undertaken, in part, thanks to funding from the Canada Research Chairs Program.}}
\author[5]{Stéphan Thomassé\thanks{Emails: pierreaboulker@gmail.com, (lcrew, xinyue.fan, hkoerts, sspirkl)@uwaterloo.ca, Ben.Moore@umanitoba.ca}}

\affil[1]{University of Waterloo, Department of Combinatorics and Optimization, Waterloo, Canada}

\affil[2]{University of Manitoba, Department of Mathematics, Winnipeg, Canada}

\affil[3]{Université de Montpellier, CNRS, LIRMM, Montpellier, France}

\affil[4]{DIENS, École normale supérieure, CNRS, PSL University, Paris, France}

\affil[5]{Univ. Lyon, ENS de Lyon, UCBL, CNRS, LIP, France.}

\affil[6]{Institute of Informatics, University of Warsaw, Poland}

\date{\today}

\begin{document}

\maketitle

\begin{abstract}
    In this note, we introduce the \emph{partial order decomposition number} of a digraph $D$, denoted $pod(D)$, defined as the minimum integer $k$ such that $A(D)=A(P_1)\cup\cdots\cup A(P_k)$, where $P_1,\ldots,P_k$ are partial orders on $V(D)$. We prove that $\dic(D)\le \diomega(D)^{pod(D)}$ for every digraph $D$. In particular, every class of digraphs with bounded $pod$ is polynomially $\dic$-bounded. We apply this to tournaments, showing that if $\mathcal C$ is a class of tournaments with bounded dichromatic number, then the closure of $\mathcal C$ under substitution is polynomially $\dic$-bounded, thereby making progress on a question of Aubian, Charbit, Lopes, and the first author. 
    As further applications of  $pod$, we prove that poset tournaments of bounded dimension are $\dic$-bounded, derive polynomial lower bounds on the directed clique number of an explicit family of tournaments, thereby answering a conjecture of Gutowski and Rams, and show that tournaments with bounded $pod$ have bounded domination number.
\end{abstract}
\tableofcontents

\section{Introduction}

This note focuses on a new digraph parameter, called partial order decomposition number.  
Recall that a \emph{partial order} $\leq$ on a set $S$ is a transitive, reflexive, and anti-symmetric relation. We will abuse terminology and say that a digraph $P = (V,A)$ which is acyclic, and furthermore satisfies that when $(u,v) \in A(P)$ and $(v,w) \in A(P)$ then $(u,w) \in A(P)$, is a \textit{partial order}. Thus, we are interpreting partial orders as transitive  acyclic digraphs. Such digraphs are also called \emph{comparability digraphs}.

Let $D$ be a digraph. A \emph{partial order decomposition} of $D$ is a collection $P_1,\ldots,P_k$ of partial orders on $V(D)$ such that
$A(D)=A(P_1)\cup\cdots\cup A(P_k)$.
The \emph{partial order decomposition number} of a digraph $D$, denoted by \emph{$pod(D)$}, is the minimum integer $k$ such that $D$ admits a partial order decomposition with $k$ parts.

The interest of the parameter $pod$ lies in its relation to the dichromatic number and the clique number of digraphs.

Before defining these two notions, let us recall their classic undirected counterpart. 
Given an undirected graph $G$, we denote by $\omega(G)$ the size of a maximum clique of $G$. A \textit{$k$-colouring} is a map $f:V(G) \to \{1,\ldots,k\}$ such that for all $xy \in E(G)$, we have $f(x) \neq f(y)$. The \textit{chromatic number} of $G$, denoted by $\chi(G)$, is the minimum $k$ such that $G$ is $k$-colourable (i.e. admits a $k$-colouring).

Digraphs in this paper have no parallel edges or loops. The existence of an arc from $a$ to $b$ in a digraph $D$ is denoted $a \to_D b$. We drop the subscript when the digraph is clear from context. 
For a digraph $D$, a \textit{$k$-dicolouring} is a map $f:V(D) \to C$ such that $|C| = k$ for all $i \in C$, the vertices coloured $i$ induce an acyclic digraph.  The \textit{dichromatic number} of $D$, denoted $\dic(D)$, is the minimum $k$ such that $D$ has a $k$-dicolouring. It was first introduced by Neumann-Lara in~\cite{neumann}.

Extending the notion of clique number to digraphs is slightly more cumbersome, as it requires introducing backedge graphs. Given a digraph $D$ and a total ordering $<$ on $V(D)$, the \textit{backedge graph of $D$ with respect to $<$}, denoted $B(D,<)$ (or simply $B$ when $D$ and $<$ are clear from context), is the undirected graph with $V(B)=V(D)$ and edge set $E(B)=\{vw : v<w,\ w\to_D v\}$. We say that a graph $G$ is a backedge graph of a digraph $D$ if there is an ordering $<_G$ of $V(D)$ such that $B(D,<_G)$ is isomorphic to $G$. Note that $B(D,<)$ comes naturally equipped with the ordering $<$, and we will often use it as the ordered graph $(B(D,<),<)$.

The following observation shows that the dichromatic number of a digraph  is completely determined by the backedge graphs of the digraph. 
It is straightforward that every independent set  of $B(D,<)$ induces an acyclic subdigraph of $D$. As a consequence, we have that $\dic(D) \le \chi(B(D, <))$. 
Conversely, by taking an ordering built from a $\dic(D)$-dicolouring,
that is taking colour classes one after the other, and ordering each colour class in a topological ordering, we get that:
\[\vec{\chi}(D) = \min\{\chi(B(D,<)):\, < \text{ is a total ordering of } V(D)\}.
\] 

This naturally leads to the following notion of clique number for a digraph, introduced in \cite{original}. 
The \textit{clique number of a digraph $D$}, denoted $\vec{\omega}(D)$, is the minimum clique number of any backedge graph of $D$. That is, 
\[\vec{\omega}(D) = \min\{\omega(B(D,<)):\, < \text{ is a total ordering of } V(D)\}\]

A total ordering $<$ of $V(D)$ is an \textit{$\diomega$-ordering} of $D$ if $\omega(B(D, <)) = \diomega(D)$.

More generally, see Section~3 of \cite{Digraph_parameter} for a discussion of digraph parameters defined from graph parameters via backedge graphs. 

The following theorem is the main reason why the parameter $pod$ is useful in our setting.
\begin{theorem}\label{thm:pod-main}
For every digraph $D$, we have $\dic(D)\le \diomega(D)^{pod(D)}$.
\end{theorem}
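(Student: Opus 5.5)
The plan is to fix a single $\diomega$-ordering $<$ of $V(D)$ and a partial order decomposition $P_1,\ldots,P_k$ of $D$ with $k=pod(D)$. We then show that the backedge graph $B=B(D,<)$ splits into $k$ perfect graphs, each with clique number at most $\diomega(D)$. Since $\dic(D)\le \chi(B(D,<))$ (observed above), it suffices to prove $\chi(B)\le \diomega(D)^k$.

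\textbf{Splitting $B$.} For each $i$, let $B_i$ be the spanning subgraph of $B$ whose edges are the pairs $vw$ with $v<w$ and $(w,v)\in A(P_i)$. Every backedge $vw$ comes from an arc $w\to_D v$, and that arc lies in some $A(P_i)$. Hence $E(B)=E(B_1)\cup\cdots\cup E(B_k)$. Each $B_i$ is a subgraph of $B$, so $\omega(B_i)\le\omega(B)=\diomega(D)$.

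\textbf{Each $B_i$ is a comparability graph.} This is the key step. Orient every edge of $B_i$ from its $<$-larger end to its $<$-smaller end; equivalently, orient it as the corresponding arc of $P_i$. Suppose $u<v<w$ with $w\to v$ and $v\to u$ both arcs of $P_i$. Transitivity of $P_i$ gives $(w,u)\in A(P_i)$. Since $u<w$, the pair $uw$ is an edge of $B_i$, oriented $w\to u$. So this orientation of $B_i$ is transitive and acyclic, and $B_i$ is a comparability graph.

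\textbf{Colouring.} Comparability graphs are perfect, so $\chi(B_i)=\omega(B_i)\le\diomega(D)$. To stay self-contained, one can instead take $f_i(v)$ to be the number of vertices on a longest directed path in the oriented $B_i$ ending at $v$. Transitivity turns such a path into a clique, so $f_i$ takes at most $\omega(B_i)$ values, and $f_i$ is proper on $B_i$.

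Now colour each $v$ by the tuple $(f_1(v),\ldots,f_k(v))$, which uses at most $\diomega(D)^k$ colours. If $vw\in E(B)$, then $vw\in E(B_i)$ for some $i$, so $f_i(v)\neq f_i(w)$. Thus the tuple colouring is a proper colouring of $B$, giving $\chi(B)\le\diomega(D)^{k}$ and hence the theorem.

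The only nontrivial point is the transitivity check in the second step, which is immediate from the transitivity of $P_i$. It relies on orienting $B_i$ consistently with $<$, so that the arcs of $P_i$ used in $B_i$ always go from larger to smaller vertices.
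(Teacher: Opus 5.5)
Your proof is correct and follows the paper's argument step for step. Like the paper, you fix an $\diomega$-ordering, split $B(D,<)$ into the backedge graphs $B(P_i,<)$, show each is a comparability graph by orienting from the larger to the smaller endpoint, colour each with at most $\diomega(D)$ colours, and take the product colouring. Your only deviation is replacing the appeal to perfectness of comparability graphs with the self-contained longest-path (Mirsky-type) colouring, which suffices here because only $\chi(B_i)=\omega(B_i)$ for each $B_i$ itself is needed.
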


A natural application of the above inequality is to $\dic$-bounded classes of digraphs. Let us first recall the corresponding undirected notion.
It is well known that there exist graphs $G$ with $\omega(G)=2$ and $\chi(G)$ arbitrarily large. Thus, it is natural to study graph classes for which chromatic number is controlled by clique number.
More specifically, the following question has been widely studied: for which graph classes $\mathcal{C}$ does there exist a function $f$ such that for all $G\in\mathcal{C}$, we have $\chi(G)\le f(\omega(G))$? Such graph classes are called \textit{$\chi$-bounded} (see~\cite{scott2020survey} for a survey on this topic). In the case where $f$ is a polynomial, they are called \textit{polynomially $\chi$-bounded}, and there exist $\chi$-bounded classes which are not polynomially $\chi$-bounded \cite{separating}.

This notion extends naturally to digraphs, with $\dic$ and $\diomega$ playing the roles of $\chi$ and $\omega$. Let $\mathcal{C}$ be a class of digraphs. We say $\mathcal{C}$ is \textit{$\dic$-bounded} if there exists a function $f$ such that for every digraph $D \in \mathcal{C}$, we have $\dic(D) \leq f(\vec{\omega}(D))$. 
As before, if $f$ can be taken to be a polynomial, then we say that $\mathcal{C}$ is \textit{polynomially $\dic$-bounded.}

As an immediate corollary of Theorem~\ref{thm:pod-main}, we get that digraph classes with bounded $pod$ are polynomially $\dic$-bounded.

\begin{corollary}\label{cor:poly-chi-pod}
Let $m$ be a positive integer, and let $\mathcal{C}$ be a class of digraphs such that
$pod(D)\le m$ for every $D\in\mathcal{C}$. Then $\mathcal{C}$ is polynomially $\dic$-bounded.
More precisely, for every $D\in\mathcal{C}$, we have $\dic(D)\le \diomega(D)^m$. 
\end{corollary}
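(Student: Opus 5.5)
The corollary is a direct specialization of Theorem~\ref{thm:pod-main}, so the plan is simply to feed the bound $pod(D)\le m$ into that inequality and then check that the resulting function of $\diomega(D)$ is a polynomial.

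Fix $D\in\mathcal C$ and write $\omega=\diomega(D)$ and $p=pod(D)\le m$. Theorem~\ref{thm:pod-main} gives $\dic(D)\le \omega^{p}$. To replace $p$ by $m$ we need $\omega\ge 1$, since then $t\mapsto \omega^t$ is non-decreasing and $\omega^p\le \omega^m$.

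\begin{itemize}
\item If $V(D)\neq\emptyset$, every backedge graph has at least one vertex, so $\omega\ge 1$ and the monotonicity applies.
\item If $V(D)=\emptyset$, both $\dic(D)$ and $\diomega(D)$ are $0$, so the inequality $\dic(D)\le \diomega(D)^m$ holds trivially, because $m\ge 1$.
\end{itemize}

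(When $D$ has no arcs, $pod(D)=0$ and the theorem reads $\dic(D)\le 1$, which is still consistent.) In all cases we obtain $\dic(D)\le \diomega(D)^m$. Taking $f(x)=x^m$, which is a polynomial independent of $D$, shows that $\mathcal C$ is polynomially $\dic$-bounded.

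There is no real obstacle here. The only point needing care is the monotonicity step, which relies on $\diomega(D)\ge 1$ for nonempty $D$, together with the trivial handling of the degenerate empty case.
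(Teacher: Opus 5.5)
Your proposal is correct and matches the paper, which presents this corollary as an immediate consequence of Theorem~\ref{thm:pod-main} with no further argument. Your extra check that $t\mapsto\diomega(D)^t$ is non-decreasing when $\diomega(D)\ge 1$, together with the separate handling of the empty digraph, is a detail the paper leaves implicit.
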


We use this to make progress on a conjecture of Aubian, Charbit, Lopes and the first author~\cite{original}. The notion of substitution is standard, and will be recalled in Section~\ref{sec:tournamentsofboundeddic}. 
Given a class of digraphs $\mathcal{C}$ we define $\mathcal{C}^{subst}$ to be the \textit{closure of $\mathcal{C}$ under substitution}. 
A \emph{tournament} is an orientation of a complete graph. 

\begin{conj}[\cite{original}]
\label{conj:substitution}
If a class of tournaments $\mathcal{C}$
is polynomially $\dic$-bounded, then so is $\mathcal{C}^{subst}$.
\end{conj}

This conjecture is motivated from the graph version, which says that the graph analogy for substitution preserves polynomial $\chi$-boundness \cite{substitution}.
We prove Conjecture \ref{conj:substitution} for the class of tournaments which have bounded dichromatic number:
\begin{theorem}
\label{thm:boundeddichromatic}
Let $\mathcal{C}$ be a class of tournaments such that each tournament in $\mathcal{C}$ has dichromatic number at most $k$. Then for each $T \in \mathcal{C}^{subst}$, we have  $\dic(T) \leq (\diomega(T))^{2\lceil \log_2(k) \rceil +1}$. In particular, the class $\mathcal{C}^{subst}$ is polynomially $\dic$-bounded. 
\end{theorem}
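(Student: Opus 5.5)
Using Corollary~\ref{cor:poly-chi-pod}, it suffices to show that every $T \in \mathcal{C}^{subst}$ satisfies $pod(T) \le 2\lceil \log_2 k\rceil + 1$.

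First we bound $pod$ for tournaments with small dichromatic number. Let $T$ be a tournament with $\dic(T)\le k$, and set $m=\lceil \log_2 k\rceil$. Fix a $k$-dicolouring with classes $V_1,\dots,V_k$, and index the classes by binary strings of length $m$. The arcs inside the classes form an acyclic digraph, but its transitive closure may contain arcs that are not in $T$. To handle this, fix for each class $V_i$ a topological ordering, and let $L$ be the total order obtained by concatenating these orderings. The forward arcs of $T$ with respect to $L$ are then all intra-class arcs together with the forward inter-class arcs. This set need not be transitive, since forward arcs of $T$ compose only along $L$. I would therefore split the arcs differently:
\begin{itemize}
\item For each bit position $j$, let $P_j^{+}$ be the bipartite arcs going from classes with bit $0$ at position $j$ to classes with bit $1$. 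A bipartite orientation in which all arcs go from one side to the other is transitive, since paths have length one.
\item Let $P_j^{-}$ be the arcs going from bit $1$ to bit $0$; these are transitive for the same reason.
\end{itemize}
Every inter-class arc is distinguished by some bit, so the $2m$ orders $P_j^{\pm}$ cover all inter-class arcs. The intra-class arcs remain. Their union over the classes is a disjoint union of acyclic digraphs, but it is not transitive. Because each class is a tournament, though, an acyclic subtournament is a transitive tournament. So the intra-class arcs form a disjoint union of transitive tournaments, which is a single partial order. This gives $pod(T)\le 2m+1$.

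Next we extend the bound to substitution. I would prove by induction on the substitution structure that every $T\in\mathcal{C}^{subst}$ has a decomposition into $2m+1$ partial orders of this specific shape: $2m$ orders that are ``bipartite-type'' and one transitive part. A cleaner route is to prove the stronger statement that every $T\in \mathcal{C}^{subst}$ has a dicolouring-like structure, and then argue directly. Concretely, suppose $T$ is obtained from $H\in\mathcal{C}$ by substituting tournaments $T_v$ (already decomposed into $P^v_1,\dots,P^v_{2m+1}$) for each vertex $v$. Decompose $H$ into $Q_1,\dots,Q_{2m+1}$ as above, and define $R_i$ as the lift of $Q_i$ (all arcs between $T_u$ and $T_w$ when $uw\in Q_i$) together with $\bigcup_v P^v_i$. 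The key check is that $R_i$ is transitive and acyclic. Consider a path $a\to b\to c$ in $R_i$:
\begin{itemize}
\item if $a,b,c$ lie in modules $u,v,w$ that are all distinct, the path lifts $Q_i$-arcs and transitivity of $Q_i$ applies;
\item if all three lie in the same module, transitivity of $P^v_i$ applies;
\item in the mixed case, say $a,b\in T_v$ and $c\in T_w$ with $vw\in Q_i$, we get $ac\in R_i$ by the definition of the lift.
\end{itemize}
Acyclicity follows similarly. Since the recursion mixes only arbitrary partial orders, no shape restriction is needed, and the base case does not even matter. Iterating the substitution gives $pod(T)\le\max_{H\in\mathcal C} pod(H)\le 2m+1$, and Corollary~\ref{cor:poly-chi-pod} then yields the theorem.

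The main obstacle is the first step: getting exactly $2\lceil\log_2 k\rceil+1$ orders. In particular, it requires checking that the bipartite-type parts really are transitive and that the intra-class part is one partial order. The substitution lemma is routine, but its mixed cases need care.
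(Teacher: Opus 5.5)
Your proof is correct and follows essentially the same route as the paper. It uses one-directional bipartite pieces plus the union of the (transitive) colour classes to get $pod\le 2\lceil\log_2 k\rceil+1$; your bit-position cover is a non-recursive, non-disjoint version of the recursive halving in Lemma~\ref{lem:boundedchrom}. It then lifts the decomposition through substitution as in Lemma~\ref{lem:compgraphsubs} and finishes with Corollary~\ref{cor:poly-chi-pod}. The one case your transitivity check skips ($a,c$ in one module and $b$ in another) cannot occur, since it would need arcs in both directions between two modules.
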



It is natural to ask  whether bounded $pod$ characterizes polynomially $\dic$-bounded classes of  tournaments. 
We  exhibit a class of tournaments with unbounded $pod$, namely crossing tournaments. In a follow-up paper, some of the authors of this note show that this class is nevertheless polynomially $\dic$-bounded, thereby showing that bounded $pod$ does not characterize polynomially $\dic$-bounded classes of tournaments.


We also prove that a subclass of poset tournaments (defined in Section~\ref{sec:poset_tournaments}), namely the class of poset tournaments with bounded dimension is $\dic$-bounded. 

We then show how to derive lower bounds on the clique number of tournaments from the inequality $\dic \leq \diomega^{pod}$. We illustrate this method by showing that a particular family of tournaments, denoted $(B_k)_{k \geq 0}$ and introduced in \Cref{sec:lower_bound}, has clique number at least polynomial in $|V(B_k)|$. This leads to a positive answer to the following conjecture. 

\begin{conj}[Gutowski and Rams, \cite{gutowski2026notecomplexitydirectedclique}]\label{conj:Gutowski}
   There exists an explicit family of tournaments $(T_n)_{n\ge 1}$ such that $|V(T_n)|=\Theta(n)$ and $\diomega(T_n)=\Omega(n^{1/100})$  
\end{conj}


The structure of the paper is as follows. We prove the inequality $\dic \leq \diomega^{pod}$ in Section~\ref{sec:main_eq}. Corollary~\ref{cor:poly-chi-pod} and Theorem~\ref{thm:boundeddichromatic} are proved in Section~\ref{sec:tournamentsofboundeddic}, and the result on crossing tournaments and poset tournaments are  proved respectively in Section~\ref{sec:crossingtournaments} and Section~\ref{sec:poset_tournaments}, and the lower bounds on the clique number of tournaments are discussed in Section~\ref{sec:lower_bound}. Finally, we end the paper with Section~\ref{sec:open_questions}, which contains two additional short results on $pod$ and a few open questions on tournaments and $pod$. The two results show that acyclic digraphs can have arbitrarily large $pod$, and that tournaments with bounded $pod$ have bounded domination number.

\section{Proof that $\dic \leq \diomega^{\,pod}$}\label{sec:main_eq}

A graph $G$ is a \textit{comparability graph} if it admits a transitive orientation.  
The next lemma follows from the well-known fact that the intersection of two partial orders is a partial order. We include a proof for completeness: 

\begin{lemma}\label{lem:compgraphbackedge}
Let $D$ be a partial order digraph. Then every backedge graph of $D$ is a comparability graph.
\end{lemma}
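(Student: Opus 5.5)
We prove Lemma~\ref{lem:compgraphbackedge} by writing down an explicit transitive orientation of the backedge graph. Let $D$ be a partial order digraph and $<$ a total ordering of $V(D)$, and let $B = B(D,<)$. By definition, $vw \in E(B)$ with $v<w$ holds exactly when $w \to_D v$. So the edges of $B$ are the arcs of $D$ that point backwards with respect to $<$. The natural choice is to orient each edge $vw$ of $B$ as it is oriented in $D$, namely from $w$ to $v$. Call the resulting digraph $\vec{B}$. Equivalently, $\vec{B}$ is the digraph whose arc set is the intersection of $A(D)$ with the arc set of the reversed total order, that is $\{(w,v) : v<w\}$. This is the sense in which the lemma is an intersection of two partial orders.

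It remains to check that $\vec{B}$ is a transitive orientation.

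\textbf{Acyclicity.} This is immediate, since every arc of $\vec{B}$ goes from a $<$-larger vertex to a $<$-smaller one.

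\textbf{Transitivity.} Suppose $(x,y)$ and $(y,z)$ are arcs of $\vec{B}$. Then $x \to_D y$ and $y \to_D z$, so transitivity of $D$ gives $x \to_D z$. We also have $z<y<x$, hence $z<x$. Therefore $xz$ is an edge of $B$, and it is oriented from $x$ to $z$ in $\vec{B}$.

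So $\vec{B}$ is a transitive acyclic orientation of $B$, and $B$ is a comparability graph. Any graph isomorphic to $B$ is then also a comparability graph, which handles the ``isomorphic to'' clause in the definition of backedge graph.

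None of these steps is a real obstacle. The only point needing care is that the orientation is inherited from $D$: it is not the orientation by $<$, but it agrees with the reverse of $<$ on every edge of $B$. This agreement is exactly why both acyclicity and transitivity go through.
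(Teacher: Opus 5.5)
Your proposal is correct and follows essentially the same route as the paper. Both orient each edge of $B$ as it is oriented in $D$, and both deduce transitivity from the transitivity of $D$ together with $z<y<x$. Your explicit acyclicity check and the remark about isomorphic copies are harmless additions.
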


\begin{proof}
Let $<$ be a total order on $V(D)$, and let $B$ be the corresponding backedge graph.
We orient each edge of $B$ as in $D$, that is, if $x<y$ and $xy\in E(B)$, we orient the edge $xy$ of $B$ as $y \to x$. 
We claim that this is a transitive orientation of $B$.
Indeed, suppose that $a \to b$ and $b \to c$ are oriented edges of $B$.
Since the orientation of $B$ is inherited from $D$, we have $a \to b$ and $b \to c$ in $D$.
Moreover, because these edges belong to the backedge graph, both arcs are backward with respect to $<$, so $c<b<a$.
Since $D$ is transitive, it follows that $a \to c$ in $D$.
As $c<a$, 
we have that $ac\in E(B)$, and is oriented as $a \to c$. 
Thus the chosen orientation of $B$ is transitive and $B$ is a comparability graph.
\end{proof}


This is useful because comparability graphs are perfect.

\begin{theorem}[\cite{berge}]
\label{thm:compareperfect}
    Comparability graphs are perfect: if $G$ is a comparability graph, then every induced subgraph $H$ of $G$ satisfies $\chi(H)=\omega(H)$.
\end{theorem}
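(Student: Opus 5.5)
The plan is to reduce to the whole graph and then give an explicit colouring with $\omega(G)$ colours, which is the classical argument (Mirsky's dual of Dilworth's theorem).

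First, the class of comparability graphs is closed under taking induced subgraphs. If $G$ has a transitive orientation and $H$ is an induced subgraph, restrict the orientation to $E(H)$. Whenever $a\to b\to c$ in $H$, transitivity in $G$ gives $ac\in E(G)$ oriented $a\to c$. Since $H$ is induced, $ac\in E(H)$ with the same orientation. So it suffices to prove $\chi(G)=\omega(G)$ for every comparability graph $G$. The inequality $\chi(G)\ge\omega(G)$ is trivial, so only $\chi(G)\le\omega(G)$ needs work.

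Fix a transitive orientation $\vec G$ of $G$. I first check that $\vec G$ is acyclic, which is the one point needing care. Suppose $v_1\to v_2\to\cdots\to v_t\to v_1$ is a directed cycle. Repeated transitivity gives $v_1\to v_t$, while the cycle also contains the arc $v_t\to v_1$. An edge of $G$ carries only one orientation, and for $t=2$ there are no parallel edges, so this is a contradiction. The same repeated transitivity shows that the vertex set of any directed path $v_1\to\cdots\to v_t$ is a clique in $G$, since $v_i\to v_j$ for all $i<j$. Hence every directed path has at most $\omega(G)$ vertices.

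Now define $f(v)$ to be the maximum number of vertices on a directed path of $\vec G$ ending at $v$. This is well defined because $\vec G$ is acyclic and finite, and it satisfies $1\le f(v)\le\omega(G)$ by the previous paragraph. It remains to check that $f$ is proper. If $uv\in E(G)$, say $u\to v$, take a longest path $P$ ending at $u$. Then $v\notin P$, since otherwise $P$ followed by $u\to v$ would close a directed cycle. So $P$ extended by the arc $u\to v$ is a directed path ending at $v$, giving $f(v)\ge f(u)+1$. Thus $f$ is an $\omega(G)$-colouring and $\chi(G)=\omega(G)$. Applying this to every induced subgraph, which is again a comparability graph, proves the theorem. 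I do not expect a real obstacle here; the only subtle step is the acyclicity of transitive orientations, which is what makes $f$ well defined and the path extension valid.
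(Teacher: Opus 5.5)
Your proof is correct. It differs from the paper only in that the paper gives no proof: the statement is quoted from Berge and used as a black box in the proof of Theorem~\ref{thm:pod-main}. Every step you give checks out:
\begin{itemize}
\item transitive orientations restrict to induced subgraphs;
\item a transitive orientation of a simple graph is acyclic, where the distinctness of the cycle vertices is what makes each transitivity step legitimate;
\item the vertex set of a directed path is a clique;
\item the height function $f(v)$ is exactly Mirsky's partition of the poset $\vec G$ into at most $\omega(G)$ antichains, and your extension argument correctly gives $f(v)\ge f(u)+1$ along every arc $u\to v$.
\end{itemize}
Your route buys self-containment and an explicit colouring. Applied to the oriented graphs $B_i$ of Lemma~\ref{lem:compgraphbackedge}, it produces the colourings $f_i$ in the proof of Theorem~\ref{thm:pod-main} concretely: $f_i(v)$ is the largest number of vertices on a directed path of $P_i$ ending at $v$ whose arcs all point backward with respect to $<$. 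It also shows that only this elementary direction is needed there, with no Dilworth-type duality and no perfect graph theorem. The citation, for its part, costs nothing and is the conventional choice for so classical a fact.
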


We are now ready to prove the main result on $pod$, that is the equation linking it with $\dic$ and $\diomega$. 

\newtheorem*{thm:pod-main}{Theorem \ref{thm:pod-main}}
\begin{thm:pod-main}
For every digraph $D$, we have $\dic(D)\le \diomega(D)^{pod(D)}$.
\end{thm:pod-main}

\begin{proof}
    Let $D$ be a digraph. Set $m=pod(D)$ and let $P_{1},\ldots,P_{m}$ be a partial order decomposition of $D$.  
    Let $<$ be an $\diomega$-ordering of $D$ and $B=B(D,<)$. So $\omega(B) = \diomega(D)$. For $i \in \{1, \ldots, m\}$, let $B_i=B(P_i, <)$. By Lemma~\ref{lem:compgraphbackedge}, each $B_i$ is a comparability graph, and is thus perfect.  Moreover, since
    $A(D)=A(P_1)\cup\cdots\cup A(P_m)$, we have
    $E(B)=E(B_1)\cup\cdots\cup E(B_m)$.

    Now, there exist proper colourings $f_i:V(B)\to \{1,\ldots,\omega(B_i)\}$ of each $B_i$.
    Since $\omega(B_i)\le \omega(B)$ for every $i$, the colouring
    $f:V(B)\to \{1,\ldots,\omega(B)\}^m$ defined by $f(v)=(f_1(v),\dots,f_m(v))$
    is a proper colouring of $B$ using at most $\omega(B)^m=\diomega(D)^m$ colours.
    Now, since the chromatic number of a backedge graph is always at most the dichromatic number of the digraph, we have $\dic(D)\le \chi(B)\le \diomega(D)^m$, as desired.
\end{proof}


\section{Polynomially $\dic$-bounded classes and substitution}
\label{sec:tournamentsofboundeddic}


The goal of this section is to prove Theorem \ref{thm:boundeddichromatic}. 
Let us first give necessary definitions. 

Given a digraph $D$, and $X,Y \subseteq V(D)$,  we say $X \Rightarrow Y$ if, for every $x \in X$ and $y \in Y$, we have $xy \in A(D)$. If $X$ or $Y$ is a singleton, we may drop the brackets to simplify the notation.

Given two digraphs $D_1$ and $D_2$ with disjoint vertex sets, and a vertex
$u \in V(D_1)$, we say that a digraph $D$ is obtained by \textit{substituting $D_2$ for $u$ in $D_1$} if the following hold:

\begin{itemize}
    \item $V(D) = (V(D_1) \setminus \{u\}) \cup V(D_2)$,
    \item $D[V(D_1) \setminus \{u\}] = D_1 \setminus u$,
    \item $D[V(D_2)] = D_2$,
    \item for all $v \in V(D_1) \setminus \{u\}$ if $v \to_{D_1} u$ (resp. $u \to_{D_1} v$, resp. $u$ and $v$ are non-adjacent in $D_1$), then $v \Rightarrow V(D_2)$ (resp. $V(D_2) \Rightarrow v$, resp. there are no arcs between $v$ and $V(D_2)$) in $D$.
\end{itemize}

We first prove that the class of tournaments with $pod$ at most $m$ is closed under substitution.

\begin{lemma}\label{lem:compgraphsubs}
Let $\mathcal{C}$ be a class of tournaments such that every $T\in\mathcal{C}$ satisfies $pod(T)\le m$.
Then every tournament $T\in\mathcal{C}^{subst}$ also satisfies $pod(T)\le m$.
\end{lemma}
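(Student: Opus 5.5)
The plan is to show that a single substitution step preserves $pod\le m$ for tournaments, and then conclude by induction on the number of substitutions needed to build $T$ from members of $\mathcal{C}$. Indeed, every $T\in\mathcal{C}^{subst}$ arises by finitely many substitutions starting from tournaments in $\mathcal{C}$. Each intermediate object is a tournament, since substituting a tournament for a vertex of a tournament yields a tournament. So it suffices to prove the following: if $T_1,T_2$ are tournaments with $pod(T_1)\le m$ and $pod(T_2)\le m$, and $T$ is obtained by substituting $T_2$ for $u\in V(T_1)$, then $pod(T)\le m$. We may assume each decomposition has exactly $m$ parts, padding with the empty partial order if necessary.

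For the single step, let $P_1,\ldots,P_m$ and $Q_1,\ldots,Q_m$ be partial order decompositions of $T_1$ and $T_2$. For each $i$, define a digraph $R_i$ on $V(T)$ by "substituting $Q_i$ for $u$ in $P_i$". Concretely, $A(R_i)$ consists of:
\begin{itemize}
\item the arcs of $P_i$ among $V(T_1)\setminus\{u\}$;
\item all arcs of $Q_i$;
\item $v\to x$ for every $x\in V(T_2)$ whenever $v\to u$ in $P_i$;
\item $x\to v$ for every $x\in V(T_2)$ whenever $u\to v$ in $P_i$.
\end{itemize}

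Covering is immediate. An arc of $T$ either lies inside $V(T_1)\setminus\{u\}$, where it belongs to some $P_i$ and hence to $R_i$; or it lies inside $V(T_2)$, where it belongs to some $Q_i$; or it joins $v$ and $V(T_2)$. In the last case it corresponds to an arc between $v$ and $u$ in $T_1$, which lies in some $P_i$, and then $R_i$ contains the whole bundle of arcs in the correct direction. Conversely, each $R_i$ uses only arcs of $T$.

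The main step is checking that each $R_i$ is a partial order. For acyclicity, contract $V(T_2)$ back to $u$. A directed cycle in $R_i$ either stays inside $V(T_2)$, which is impossible since $Q_i$ is acyclic, or it projects to a closed walk in $P_i$ that passes through $u$ at most as a single visit, so it yields a directed cycle in $P_i$, again a contradiction.

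For transitivity, take $a\to b\to c$ in $R_i$ and split into cases according to which of $a,b,c$ lie in $V(T_2)$:
\begin{itemize}
\item If all three lie in $V(T_2)$, use transitivity of $Q_i$.
\item If none lie in $V(T_2)$, use transitivity of $P_i$.
\item If $b\in V(T_2)$ and $a,c\notin V(T_2)$, then $a\to u\to c$ in $P_i$, so $a\to c$ in $P_i$ and hence in $R_i$.
\item If $a,b\in V(T_2)$ and $c\notin V(T_2)$, then $u\to c$ in $P_i$, which gives $a\to c$ directly. The case $b,c\in V(T_2)$, $a\notin V(T_2)$ is symmetric.
\item If $a,c\in V(T_2)$ and $b\notin V(T_2)$, then $u\to b\to u$ in $P_i$, which is impossible since $P_i$ is acyclic.
\item If exactly one of $a,c$ lies in $V(T_2)$ and $b$ does not, the arc $a\to c$ is inherited from the arcs of $P_i$ through $u$ together with transitivity of $P_i$ (for example, $a\to b$ and $b\to u$ give $a\to u$).
\end{itemize}

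This case analysis is routine but is the only place where care is needed. I expect no real obstacle. The one subtlety is that we genuinely use tournaments only to guarantee that the class stays within tournaments; the construction itself works for arbitrary digraphs.
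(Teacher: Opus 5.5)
Your proposal is correct and follows the paper's proof: reduce to a single substitution step, form the $i$-th part by substituting $Q_i$ for $u$ in $P_i$, and check transitivity by cases on which of $a,b,c$ lie in $V(T_2)$. Your explicit acyclicity check is a harmless addition, since transitivity inside a subdigraph of a tournament already forces acyclicity. (There the projected closed walk may revisit $u$ several times, but any closed walk in $P_i$ still contains a directed cycle.) Your handling of the case $a,c\in V(T_2)$, $b\notin V(T_2)$ via $u\to b\to u$ in $P_i$ is more explicit than the paper's wording.
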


\begin{proof}
    It suffices to show that if $T$ and $T'$ are both tournaments such that $pod(T) \leq m$ and $pod(T') \leq m$, then the same holds for all substitutions of $T'$ for a vertex of $T$. 

    Consider a substitution of $T'$ for a vertex $v$ of $T$ and let the resulting tournament be $T_{v \rightsquigarrow T'}$.  Let $P_1, \dots, P_m$ and $P'_1, \dots, P'_m$ be partial order decompositions of respectively $T$ and $T'$. We set $Q_i$ to be the spanning subdigraph of $T_{v \rightsquigarrow T'}$ obtained from substituting $P'_i$ for vertex $v$ in $P_i$. More formally: 
    \begin{align*}
    A(Q_i) ={}& A(P'_i) \cup \{e \in A(P_i) \mid v \notin e\}
 \cup \{(x,y) : x \in V(T'),\ (v,y) \in A(P_i)\}  \cup \{(y,x) : x \in V(T'),\ (y,v) \in A(P_i)\}.
\end{align*}

    Clearly $A(T_{v \rightsquigarrow T'}) = A(Q_1) \cup \dots \cup A(Q_m)$. It remains to show that each $Q_i$ is transitive. Let $a,b,c \in V(T_{v \rightsquigarrow T'})$ be three vertices such that $a \rightarrow_{T_{v \rightsquigarrow T'}} b$ and $b \rightarrow_{T_{v \rightsquigarrow T'}} c$. If $a, b, c \in V(T')$, then $(a,b), (b,c) \in A(P'_i)$, and $P'_i$ is transitive, so $(a,c) \in A(P'_i) \subseteq A(Q_i)$. If $|\{a, b, c\} \cap V(T')| \leq 1$, then the arcs corresponding to $(a, b)$ and $(b, c)$ in $P_i$ guarantee the existence of $(a,c)$ because $P_i$ is transitive. 

    Finally, observe that if $|\{a,b,c\} \cap V(T')| = 2$ then $b \in V(T')$ by definition of the substitution. Indeed, we have $x \Rightarrow V(T')$ for every $x \in V(T_{v \rightsquigarrow T'}) \setminus V(T')$. Then, exactly one of $a$ and $c$ is in $V(T')$ and in either case we have $a \to_{T_{v \rightsquigarrow T'}} c$ by definition of the substitution.
\end{proof}

A particular special case of tournaments with bounded $pod$ are tournaments with bounded $\dic$. In particular, every digraph with a bipartite underlying undirected graph has $pod$ at most 2, which can be used iteratively to show the following. 

\begin{lemma}\label{lem:boundedchrom}

Let $T$ be a tournament with dichromatic number $k$ and colour classes $V_1, \dots, V_k$ arising from a $k$-dicolouring of $T$. For each $i$, let $A(V_i)$ denote the set of arcs of $T$ with both endpoints in $V_i$. Then   $T \backslash \{A(V_1), \dots, A(V_k)\}$ decomposes (disjointly) into at most $2\lceil \log_2(k)\rceil$ comparability digraphs.
In particular, \\ $pod(T \backslash \{A(V_1), \dots, A(V_k)\}) \leq 2\lceil \log_2(k)\rceil$.
    
\end{lemma}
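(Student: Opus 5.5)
We plan to label the colour classes by binary strings and split the inter-class arcs bit by bit. Set $t=\lceil \log_2(k)\rceil$ and assign to each index $i\in\{1,\dots,k\}$ a distinct binary string $\beta(i)\in\{0,1\}^t$. Every arc of $T\setminus\{A(V_1),\dots,A(V_k)\}$ goes from some $V_i$ to some $V_j$ with $i\neq j$. Let $\ell$ be the first coordinate where $\beta(i)$ and $\beta(j)$ differ. We put the arc into a class $C_{\ell,\epsilon}$, where $\epsilon\in\{0,1\}$ records the direction of the arc relative to the bit $\ell$: $\epsilon=0$ if the arc goes from the side with bit $0$ to the side with bit $1$ at position $\ell$, and $\epsilon=1$ otherwise. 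This gives $2t$ classes, and they partition the arc set disjointly.

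The key step is to check that each $C_{\ell,\epsilon}$ is a comparability digraph, i.e. acyclic and transitive. The arcs of $C_{\ell,0}$ all go from vertices $x$ with $\beta(x)_\ell=0$ to vertices with $\beta(x)_\ell=1$, where $\beta(x)$ denotes the label of the class containing $x$. So no directed path of length $2$ exists in $C_{\ell,0}$. Indeed, the middle vertex would need bit $1$ (as a head) and bit $0$ (as a tail) at position $\ell$. Hence $C_{\ell,0}$ is vacuously transitive, and it is acyclic because it is bipartite with all arcs directed one way. The same argument applies to $C_{\ell,1}$. This is the observation mentioned before the lemma: a digraph whose arcs all go from one side of a bipartition to the other is a partial order.

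Thus the $2t$ classes give a disjoint decomposition of $T\setminus\{A(V_1),\dots,A(V_k)\}$ into comparability digraphs, which yields the bound on $pod$. We do not even need the "first differing coordinate" rule for correctness. Any differing coordinate works, as long as each arc is assigned to exactly one class, so that the decomposition is disjoint.

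The main point to be careful about is the definition of arcs in the decomposition. Each $C_{\ell,\epsilon}$ must be viewed as a spanning subdigraph on $V(T)$, which is permitted since partial orders are allowed to have isolated or incomparable vertices. The edge case $k=1$ gives $t=0$, and there are no remaining arcs to decompose. Nothing here uses that the colour classes are acyclic, only that they form a partition. The acyclicity is needed later, when the arcs inside the $V_i$ are handled separately.
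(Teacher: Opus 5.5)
Your proof is correct and is essentially the paper's argument written without recursion: the paper repeatedly splits the colour classes into two halves, and at each level it takes the two one-way digraphs between the halves, merging disjoint copies across blocks; that is exactly your $C_{\ell,\epsilon}$ with $\ell$ the first differing bit of the labels. Your explicit binary labelling has the small advantage of handling arbitrary $k$ directly, with no reduction to powers of $2$, and of making clear that only the partition $V_1,\dots,V_k$ is used, not the acyclicity of the classes.
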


\begin{proof}
    Throughout the proof let $V = V(T)$ for clarity. Let $f(k)$ be the smallest number such that every tournament with dichromatic number $k$ and colour classes $V_1, \dots, V_k$ allows $T \backslash (A(V_1) \cup \dots \cup A(V_k))$ to be decomposed into at most $f(k)$ comparability digraphs. We aim to show that $f$ exists, and that $f(k) \leq 2\lceil \log_2(k)\rceil$ for $k \geq 1$.

   We proceed by strong induction on $k$. It suffices to consider the cases when $k$ is a power of $2$. The case $k = 1$ is straightforward as $V_1 = V$, and so $T \setminus A(V_1)$ has no edges. 
   
   Now let $k = 2^t$ for some $t \geq 1$. Let $ m= 2^{t-1}$. Let $T_{\alpha} =  T[V_1 \cup \dots \cup V_m]$ and $T_{\beta} = T[V_{m+1} \cup \dots \cup V_k]$ be the subtournaments of $T$ induced by $V_1 \cup \dots \cup V_m$ and $V_{m+1} \cup \dots \cup V_k$. respectively. We apply strong induction to $T_{\alpha}$ and $T_{\beta}$ to decompose $T_\alpha \setminus (A(V_1) \cup \dots \cup A(V_m))$ and $T_\beta \setminus (A(V_{m+1}) \cup \dots \cup A(V_k))$ into $2 \lceil \log_2 m \rceil = 2(t-1)$ comparability digraphs each. 
   Since the disjoint union of two comparability digraphs is a comparability digraph, we may partition the arcs of $(A(T_\alpha) \cup A(T_\beta)) \setminus (A(V_1) \cup \dots \cup A(V_k))$
   into $2(t-1)$ comparability digraphs $D_\alpha^1 \cup D_\beta^1,\dots,D_\alpha^{2(t-1)} \cup D_\beta^{2(t-1)}$, where $D_\alpha^1,\dots,D_\alpha^{2(t-1)}$ and $D_\beta^1,\dots, D_\beta^{2(t-1)}$ are obtained by induction hypothesis on $T_\alpha$ and $T_\beta$, respectively.

   Now we create two more comparability digraphs: One containing all arcs from $V(T_\alpha)$ to $V(T_\beta)$, and one containing all arcs from $V(T_\beta)$ to $V(T_\alpha)$. Note that every vertex is a source or a sink with respect to each of these sets, so these are indeed comparability digraphs. This yields the desired decomposition using $2t$ comparability digraphs. 
   \end{proof}

   \begin{cor}\label{cor:boundedchrom}

   Let $T$ be a tournament with dichromatic number $k$. Then $T$ may be decomposed (disjointly) into at most $2\lceil \log_2(k)\rceil+1$ comparability digraphs. In particular, $pod(T) \leq 2\lceil \log_2(k)\rceil+1$.
       
   \end{cor}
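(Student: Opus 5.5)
We apply Lemma~\ref{lem:boundedchrom} and add one more comparability digraph that covers the arcs inside the colour classes. Fix a $k$-dicolouring of $T$ with colour classes $V_1,\dots,V_k$. Lemma~\ref{lem:boundedchrom} gives a disjoint decomposition of $T \setminus (A(V_1)\cup\dots\cup A(V_k))$ into at most $2\lceil \log_2(k)\rceil$ comparability digraphs on $V(T)$. It remains to cover the arcs of $A(V_1)\cup\dots\cup A(V_k)$ by a single comparability digraph that uses none of the arcs already covered.

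For this, let $D_0$ be the spanning subdigraph of $T$ with arc set $A(V_1)\cup\dots\cup A(V_k)$. It is the disjoint union of the subtournaments $T[V_i]$. Each $T[V_i]$ is acyclic because $V_i$ is a colour class of a dicolouring. An acyclic tournament is transitive: if $a\to b\to c$, then $a$ and $c$ are adjacent, and the arc $c\to a$ would close a directed triangle, so $a\to c$. Hence each $T[V_i]$ is a partial order, indeed a total order. Since there are no arcs of $D_0$ between different classes, any directed path of length two in $D_0$ lies inside a single $V_i$. So $D_0$ is transitive and acyclic, that is, a comparability digraph.

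Finally, $D_0$ together with the digraphs from Lemma~\ref{lem:boundedchrom} partitions $A(T)$: the arcs inside the classes and the arcs between classes are disjoint sets whose union is $A(T)$. This gives at most $2\lceil\log_2(k)\rceil+1$ comparability digraphs, and hence the bound on $pod(T)$.

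There is no real obstacle here. The only point to check is that a disjoint union of transitive acyclic digraphs on disjoint vertex sets stays transitive, which holds because there are no arcs between the pieces. The argument also works when $k$ is not a power of $2$: Lemma~\ref{lem:boundedchrom} is stated for arbitrary $k$, and its proof reduces to the power-of-$2$ case by allowing empty colour classes.
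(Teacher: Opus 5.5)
Your proof is correct and matches the paper's argument: apply Lemma~\ref{lem:boundedchrom} to the arcs between colour classes, then add the single comparability digraph $(V(T), A(V_1)\cup\dots\cup A(V_k))$, which is a disjoint union of acyclic, hence transitive, tournaments. Your check that an acyclic tournament is transitive spells out a step the paper leaves implicit.
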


   \begin{proof}
By Lemma \ref{lem:boundedchrom}, if $V_1, \dots, V_k$ are colour classes of a $k$-colouring of $T$, then $A(T) \backslash (A(V_1) \cup \dots \cup  A(V_k))$ may be decomposed into at most $2\lceil \log_2(k)\rceil$ comparability digraphs. Then the result follows by noting that $(V(T), A(V_1) \cup \dots \cup A(V_k))$, as a disjoint union of acyclic tournaments, is a comparability digraph as well. \end{proof}

With these lemmas in hand, it is straightforward to prove the  main result of this section.
\newtheorem*{thm:boundeddichromatic}{Theorem \ref{thm:boundeddichromatic}}
\begin{thm:boundeddichromatic}
Let $\mathcal{C}$ be a class of tournaments such that each tournament in $\mathcal{C}$ has dichromatic number at most $k$. Then for each $T \in \mathcal{C}^{subst}$, we have  $\dic(T) \leq (\diomega(T))^{2\lceil \log_2(k) \rceil +1}$. In particular, the class $\mathcal{C}^{subst}$ is polynomially $\dic$-bounded. 
\end{thm:boundeddichromatic}

   \begin{proof}
       By Corollary \ref{cor:boundedchrom}, each tournament $T \in \mathcal{C}$ satisfies $pod(T) \leq 2\lceil \log_2(k) \rceil +1$. By Lemma \ref{lem:compgraphsubs}, each tournament $T \in \mathcal{C}^{subst}$ also satisfies $pod(T) \leq 2\lceil \log_2(k) \rceil +1$. Then, by Corollary \ref{cor:poly-chi-pod}, the result follows.
   \end{proof}

   By modifying the above lemmas and theorems to the case where $\mathcal{C}$ is a polynomially $\dic$-bounded class of tournaments, say with $\vchi(T) \leq (\vomega(T))^r$ for each $T \in \mathcal{C}$, it is possible to show that then $\mathcal{C}^{subst}$ is $\dic$-bounded by  $$\vomega(T)^{2\lceil r\log_2 \vomega(T) \rceil + 1},$$ improving the single-exponential bound of \cite{original} to a ``quasi-polynomial'' one. However, it appears that a different technique may be required to prove (or disprove) that substitution fully preserves polynomial $\dic$-boundedness in tournaments.


\section{Crossing tournaments do not have bounded pod}
\label{sec:crossingtournaments}

In the previous section, we showed that digraphs with bounded $pod$ are polynomially $\dic$-bounded, and that whenever a class $\mathcal{C}$ of tournaments has bounded $pod$, then $\mathcal{C}^{subst}$ is polynomially $\dic$-bounded. It is therefore natural to ask whether the converse of these two statements holds. In this section, we show that it does not: there exist acyclic digraphs with arbitrarily large $pod$, and there is a class of tournaments, namely crossing tournaments, with unbounded $pod$. In a future paper, some of the authors show that crossing tournaments are nevertheless polynomially $\dic$-bounded, completing the proof that the converse of Theorem~\ref{thm:boundeddichromatic} is false.

We now turn our attention to crossing tournaments. Let us first recall some definitions about interval graphs.  

An \textit{interval system} $\mathcal{I}$ is a set of open intervals in $\mathbb{R}$ such that no two elements of $\mathcal{I}$ share an endpoint. For an interval $I \in \mathcal{I}$, we denote its left endpoint by $L(I)$ and its right endpoint by $R(I)$. The interval system $\mathcal{I}$ is naturally equipped with two (not necessarily distinct) total orders: $<_L$, the ordering by left endpoint, and $<_R$, the ordering by right endpoint. There is also a natural partial order $\leq_{\mathcal I}$ defined by $I <_{\mathcal I} J$ if and only if $R(I) < L(J)$; that is, if and only if $I$ and $J$ are disjoint and $I$ lies to the left of $J$ in $\mathbb{R}$.

The \textit{interval graph} $G_{\mathcal I}$ of an interval system $\mathcal I$ is the ordered graph with vertex set $\mathcal I$, ordered by $<_L$, in which two intervals $I,J \in \mathcal I$ with $I <_L J$ are adjacent if and only if $R(I) > L(J)$.

A tournament $T$ is a \textit{crossing tournament} if there exists an interval system $\mathcal I$ and a total ordering $<$ of $V(T)$ such that the ordered graph $(B(T,<),<)$ is isomorphic to the ordered interval graph $(G_{\mathcal I},<_L)$. Crossing tournaments were introduced in \cite{crossing} as tournaments with unbounded $\diomega$.

Let us now prove  that ``universal" crossing tournaments have unbounded $pod$.

We define the following notation.
Intuitively, this corresponds to using closed intervals with additional tie-breaking rules to order their endpoints.
\begin{definition}\label{def:intGeneral}
    For positive integers $1 \leq i < j \leq n$, let \\ $$\itv{i,j} = \left(i-\frac{1}{j+2}, j+\frac{1}{i+2}\right).$$ 
    Let $\intI_n$ be the interval system consisting of intervals $\{\itv{i,j}:1 \leq i < j \leq n\}$. Let $B_n$ be the ordered interval graph of $\intI_n$, and let $T_n$ be the tournament with backedge graph $B_n$.  
\end{definition}

Thus, the vertices of $B_n$ are the intervals $\itv{i,j} \in \intI_n$, whose left endpoints are increasing in lexicographic order on the pair $(i,j)$.

It is easy to check that two intervals $I_1 = \itv{i_1,j_1}$ and $I_2 = \itv{i_2,j_2}$ with $L(I_1) < L(I_2)$ are adjacent in $B_n$ if and only if $i_1 \leq i_2 \leq j_1$. Thus, these are exactly the cases in which pairs of intervals in $T_n$ have an arc $I_2 \rightarrow_{T_n} I_1$, and in all other cases we have an arc $I_1 \rightarrow_{T_n} I_2$.

The proof of the theorem below is adapted from a similar one of Gy{\'a}rf{\'a}s, Marits, and T{\'o}th \cite{perfectIntoComparability}, proving that in the undirected setting, interval graphs cannot be written as a union of a bounded number of comparability graphs. 

We need the following definitions. Given a digraph $D$, its \emph{underlying (undirected) graph} $UG(D)$ is defined as the graph with $V(UG(D)) = V(D)$ and $vw \in E(UG(D))$ if and only if $(v,w) \in A(D)$ or $(w,v) \in A(D)$. Given a digraph $D = (V,A)$, the \emph{line digraph} $\arrowL(D)$ has vertex set $A$ and arc set $\{(u,v)(v,w): u,v,w \in V, (u,v), (v,w) \in A\}$. Thus, whenever two arcs of $D$ meet head-to-tail, there is an arc between them in the natural direction in $\arrowL(D)$. We will use the following auxiliary result on line digraphs.

\begin{theorem}[Erd\H{o}s and Hajnal~\cite{EH66}, Harner and Entringer \cite{loglinedigraph}]\label{thm:logcoloring}
Given a digraph $D$, we have 
$$
\chi(UG(\arrowL(D))) \geq \log_2(\chi(UG(D)))
$$
where for $D$ a digraph, $UG(D)$ is the underlying undirected graph. 
\end{theorem}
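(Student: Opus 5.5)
The statement just given is the theorem of Erd\H{o}s--Hajnal and Harner--Entringer: for every digraph $D$, $\chi(UG(\arrowL(D))) \geq \log_2(\chi(UG(D)))$. I would prove it in the contrapositive form $\chi(UG(D)) \le 2^{\chi(UG(\arrowL(D)))}$. The idea is to turn a proper colouring of the line digraph into a colouring of $D$ whose colours are subsets of the colour set.

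\emph{Setup.} Set $k=\chi(UG(\arrowL(D)))$ and fix a proper colouring $c: A(D)\to\{1,\dots,k\}$ of $UG(\arrowL(D))$. Properness says exactly this: whenever $(u,v),(v,w)\in A(D)$, we have $c(u,v)\neq c(v,w)$. In words, an arc entering $v$ and an arc leaving $v$ never share a colour. Loops are absent by assumption, and digons $u\to v\to u$ are handled by the same condition.

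\emph{Vertex colouring.} To each vertex $v$ assign
\[
S(v)=\{c(v,w) : (v,w)\in A(D)\}\subseteq\{1,\dots,k\},
\]
the set of colours on its out-arcs. I claim $S$ is a proper colouring of $UG(D)$. Take an arc $(u,v)$ and let $a=c(u,v)$.
\begin{itemize}
\item Since $(u,v)$ is an out-arc of $u$, we have $a\in S(u)$.
\item Every out-arc $(v,w)$ of $v$ has colour different from $a$ by properness, so $a\notin S(v)$.
\end{itemize}
Hence $S(u)\neq S(v)$, and $S$ uses at most $2^k$ colours. Therefore $\chi(UG(D))\le 2^k$, and taking $\log_2$ gives the theorem.

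No step is a real obstacle. The one point needing care is the orientation convention: the colour of the arc $(u,v)$ must be recorded at its tail, using out-neighbour colour sets, and not at its head. With that choice the proof is immediate.
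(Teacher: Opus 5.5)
Your proof is correct. The paper does not prove this statement; it quotes it from Erd\H{o}s--Hajnal and Harner--Entringer. Your argument, colouring each vertex by the set of colours on its out-arcs so that the colour of an arc $(u,v)$ lies in the set of $u$ but not in that of $v$, is the classical proof of that result.

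One small correction to your closing remark: using out-arcs rather than in-arcs is immaterial, since colouring by in-arc colour sets works symmetrically. What actually matters is that the same convention is used at every vertex; for instance, taking all incident arc colours would fail.
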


\begin{theorem} \label{thm:notkcomp}
For every positive integer $k'$, there is a crossing tournament $T$ with $pod(T) \geq k'$.
\end{theorem}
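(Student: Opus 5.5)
The plan is to show that the crossing tournament $T_n$ of Definition~\ref{def:intGeneral} has $pod(T_n)\ge \log_2\log_2 n$, and then take $n$ large. First, $T_n$ is a crossing tournament: $\intI_n$ is an interval system, since the tie-breaking offsets $\frac{1}{j+2},\frac{1}{i+2}$ make all endpoints distinct, and $B_n$ is by definition the backedge graph of $T_n$. So the task is a lower bound on $pod(T_n)$.

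Suppose $A(T_n)=A(P_1)\cup\cdots\cup A(P_k)$ with each $P_t$ a partial order. Let $D$ be the transitive tournament on $\{1,\dots,n\}$ with $i\to j$ for $i<j$. The vertices of $\arrowL(D)$ are the pairs $(i,j)$ with $i<j$, which we identify with the intervals $\itv{i,j}$. The arcs of $\arrowL(D)$ are $(i,j)(j,l)$ with $i<j<l$.

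By the adjacency criterion stated after Definition~\ref{def:intGeneral}, $\itv{i,j}$ and $\itv{j,l}$ are adjacent in $B_n$, because $i\le j\le j$. Hence $\itv{j,l}\to_{T_n}\itv{i,j}$. So every arc of $\arrowL(D)$ corresponds, reversed, to an arc of $T_n$. Colour each arc of $\arrowL(D)$ by some $t$ such that the corresponding arc of $T_n$ lies in $P_t$.

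The key step is the following claim: no colour class contains a directed path of length two. Suppose $(i,j)(j,l)$ and $(j,l)(l,m)$ with $i<j<l<m$ both have colour $t$. Then $\itv{l,m}\to\itv{j,l}\to\itv{i,j}$ in $P_t$. Transitivity gives $\itv{l,m}\to\itv{i,j}$ in $P_t\subseteq T_n$. However, $\itv{i,j}$ and $\itv{l,m}$ are not adjacent in $B_n$, since $l>j$ violates $i\le l\le j$. Therefore $\itv{i,j}\to_{T_n}\itv{l,m}$, which contradicts $T_n$ being a tournament. It follows that in each colour class every vertex is a source or a sink, i.e.\ each colour class is bipartite, with the sources on one side and the sinks on the other.

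Consequently $UG(\arrowL(D))$ is a union of $k$ bipartite graphs. Taking the product of the $k$ two-colourings gives $\chi(UG(\arrowL(D)))\le 2^k$. On the other hand, Theorem~\ref{thm:logcoloring} gives $\chi(UG(\arrowL(D)))\ge\log_2\chi(K_n)=\log_2 n$. Hence $k\ge\log_2\log_2 n$, and choosing $n\ge 2^{2^{k'}}$ yields $pod(T_n)\ge k'$.

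The main obstacle is the key claim, which turns transitivity of a single $P_t$ into a forbidden two-path. It relies on checking the non-adjacency of $\itv{i,j}$ and $\itv{l,m}$ correctly from the tie-breaking in the interval definitions. Everything else is bookkeeping.
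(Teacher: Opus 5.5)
Your proof is correct and follows the paper's argument. The forbidden monochromatic directed two-path in $\arrowL(D)$ is exactly the paper's cyclic-triangle observation for $a<b<c<d$, which amounts to a proper colouring of $G_n=UG(\arrowL(\arrowL(TT_n)))$. The only difference is that where the paper applies Theorem~\ref{thm:logcoloring} a second time, you inline its short proof: each colour class is bipartite, so the product colouring gives $\chi(UG(\arrowL(D)))\le 2^k$. That substitution is perfectly valid.
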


\begin{proof}
    Let $T_n$ be as in Definition \ref{def:intGeneral} with $n \geq 2^{2^{k'}}$.

    Let $P_1, \dots, P_m$ be a partial order decomposition of $T_n$. Let $\phi: A(T_n) \rightarrow [m]$ map each arc $r$ of $T_n$ to the least $i$ such that $r \in A(P_i)$. Thus the sets $\phi^{-1}(i)$ for $i \in [m]$ partition $A(T_n)$.

    For each quadruple of positive integers $1 \leq a < b < c < d \leq n$, note that we have $\itv{a,b} \leftarrow \itv{b,c}$, $\itv{b,c} \leftarrow \itv{c,d}$, and $\itv{a,b} \rightarrow \itv{c,d}$ in $A(T_n)$, so these intervals form a cyclic triangle. It follows that for each such integer quadruple, we have $\phi(\itv{b,c}\itv{a,b}) \neq \phi(\itv{c,d}\itv{b,c})$. Thus, letting $G_n$ be the graph with $V(G_n) = \{(a,b,c): 1 \leq a < b < c \leq n\}$ and edge set $\{(a,b,c)(b,c,d): 1 \leq a < b < c < d \leq n \}$, we have that the graph $G_n$ is $m$-colourable, by giving vertex $(a,b,c)$ the colour $\phi(\itv{b,c}\itv{a,b})$, which is a proper colouring by the previous observation.

    Now, consider the transitive tournament $TT_n$ with vertices $[n]$ such that for each $i < j$ we have $i \rightarrow_{TT_n} j$. Then $\arrowL(TT_n)$ has vertices consisting of ordered pairs $(i,j)$ with $i < j$ with arcs $\{(i,j)(j,k): 1 \leq i < j < k \leq n\}$. Moreover, $\arrowL(\arrowL(TT_n))$ is isomorphic to a digraph with vertex set $\{(i,j,k): 1 \leq i < j < k \leq n\}$ and arc set $\{(i,j,k)(j,k,l): 1 \leq i < j < k < l \leq n\}$. In particular $UG(\arrowL(\arrowL(TT_n))$ is isomorphic to $G_n$ defined above. But then, by Theorem \ref{thm:logcoloring}, we have
    $$m \geq \chi(G_n) = \chi(UG(\arrowL(\arrowL(TT_n)))) \geq \log_2(\chi(UG(\arrowL(TT_n)) \geq \log_2(\log_2(\chi(UG(TT_n)))) = \log_2(\log_2(n)) \geq k'$$
    Since our initial partial order decomposition was arbitrary, we have $pod(T_n) \geq k'$.\end{proof}

\section{Poset tournaments}\label{sec:poset_tournaments}


We now turn our attention to poset tournaments.
A tournament $T$ is a \textit{poset tournament} if there exist a total ordering $<$ of $V(T)$ such that, for all $u < v < w$, if $u \to_T v$ and $v \to_T w$, then $u \to_T w$. In other words, the forward edges of $T$ for $<$ form a poset. In this case, it follows that $B(T,>)$ is a comparability graph.

Poset tournaments were first introduced in~\cite{dom_in_tournaments}, where they are conjectured to be precisely the tournaments $H$ such that $H$-free tournaments have bounded domination number.

We propose the following conjecture:

\begin{conj}\label{conj:poset_chi_bounded}
    The class of poset tournaments is $\dic$-bounded.
\end{conj}

We now recall the standard notion of (Dushnik–Miller) dimension of a poset. A \emph{linear extension} of a poset $P$ is a total ordering $<$ of $V(P)$ such that whenever $u <_P v$, we also have $u < v$. Given two posets $P_1,P_2$ on the same vertex set, their intersection, denoted $P_1 \cap P_2$, is the poset $P$ where $u <_P v$ if $u <_{P_1} v$ and $u <_{P_2} v$. (This also matches taking the intersection of the arc sets of corresponding oriented graphs).
The \emph{dimension} of a poset $P$, denoted $\dim(P)$, is the minimum integer $d$ such that there exist $d$ linear extensions $<_1,\dots,<_d$ of $P$ whose intersection is $P$, that is, such that for all distinct $u,v \in V(P)$, we have $u <_P v$ if and only if $u <_i v$ for every $i \in \{1,\dots,d\}$.

A \emph{witness} for a poset tournament $T$ is a pair $(P,<)$, where $P$ is a poset on $V(T)$ and $<$ is a linear extension of $P$, such that for all $u,v \in V(T)$ with $u<v$, we have $v\to_T u$ if and only if $u <_P v$.

The \emph{dimension} of a poset tournament $T$ is the minimum value of $\dim(P)$ over all witnesses $(P,<)$ for $T$.

As a partial result on  conjecture~\ref{conj:poset_chi_bounded}, we prove that the class of poset tournaments with bounded dimension is $\dic$-bounded. 

\begin{lemma}\label{lem:dim2pod} 
Let $T$ be a poset tournament. Then $pod(T) \le \dim(T) + 1$.
\end{lemma}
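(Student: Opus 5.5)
The plan is to write the arcs of $T$ as one reversed poset plus one partial order for each linear extension in a realizer. Fix a witness $(P,<)$ for $T$ with $\dim(P)=\dim(T)=d$, and let $<_1,\dots,<_d$ be linear extensions of $P$ whose intersection is $P$. I split $A(T)$ into backward arcs, meaning arcs $v\to_T u$ with $u<v$, and forward arcs, meaning arcs $u\to_T v$ with $u<v$. I cover the backward arcs by one partial order and the forward arcs by $d$ partial orders, which gives $pod(T)\le d+1$.

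\emph{Backward arcs.} By the definition of a witness, for $u<v$ we have $v\to_T u$ exactly when $u<_P v$. Since $<$ is a linear extension of $P$, every pair $u<_P v$ also satisfies $u<v$. Hence the backward arcs are exactly the pairs $(v,u)$ with $u<_P v$. This is the reverse of $P$, which is transitive and acyclic, so it is a partial order in the paper's sense.

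\emph{Forward arcs.} A forward arc $u\to_T v$ with $u<v$ occurs exactly when $u\not<_P v$. Because $v<_P u$ is excluded (it would force $v<u$), this means $u$ and $v$ are incomparable in $P$. Since $P$ is the intersection of the $<_i$, some index $i$ then has $v<_i u$. For each $i\in\{1,\dots,d\}$, define $Q_i$ as the digraph on $V(T)$ with arcs $(u,v)$ whenever $u<v$ and $v<_i u$.

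\begin{itemize}
\item \emph{$Q_i$ is a partial order.} It is the intersection of the linear order $<$ with the reverse of the linear order $<_i$, and an intersection of partial orders is a partial order. Concretely, $u<v<w$ together with $w<_i v<_i u$ gives $u<w$ and $w<_i u$.
\item \emph{$Q_i\subseteq A(T)$.} If $(u,v)\in A(Q_i)$, then $v<_i u$ implies $u\not<_P v$, and with $u<v$ this gives $u\to_T v$.
\item \emph{The $Q_i$ cover all forward arcs.} This is the previous paragraph: every forward arc $u\to_T v$ has some $i$ with $v<_i u$.
\end{itemize}

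Together with the reversed $P$, this gives a partial order decomposition of $T$ with $d+1$ parts, so $pod(T)\le \dim(T)+1$.

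The only step that needs care is checking that each $Q_i$ contains no arc outside $T$. This holds because $v<_i u$ rules out $u<_P v$, as $<_i$ extends $P$. I expect no real obstacle beyond that check.
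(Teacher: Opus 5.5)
Your proof is correct and is essentially the paper's argument. One partial order comes from the witness poset itself. For each of the $d$ realizer orders $<_i$, the intersection of the witness order with the reverse of $<_i$ covers the arcs between $P$-incomparable pairs.

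The only difference is orientation bookkeeping. The paper's write-up treats the forward arcs as the $P_0$-comparable pairs and uses $P_i={>_0}\cap{<_i}$. You follow the stated definition of a witness, in which backward arcs are the $P$-comparable pairs, and so use the reverse of $P$ and ${<}\cap{>_i}$. Yours is the reading consistent with that definition.
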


\begin{proof}
    Let $(P_0,<_0)$ be a witness for $T$ such that $\dim(P_0)=\dim(T)=d$. By definition of the dimension, there are linear extensions $<_1, ..., <_d$ of $P_0$ such that $P_0 = {} <_1 \cap \cdots \cap <_d$. For each $1 \le i \le d$, we define the poset $P_i = {} >_0 \cap <_i$. We show that $E(T) = E(P_0) \cup E(P_1) \cup \cdots \cup E(P_d)$.

    First, we show that $E(T) \subseteq E(P_0) \cup E(P_1) \cup \cdots \cup E(P_d)$. Let $u <_0 v$ be two vertices of $T$.
    If $u \to_T v$, then $u <_{P_0} v$ and $u \to_{P_0} v$.
    Otherwise, $v \to_T u$, which means that $u$ and $v$ are incomparable in $P_0$. This implies that there are $1 \le i, j \le d$ such that $u <_i v$ and $v <_j u$, hence $v \to_{P_j} u$.
    
    Conversely, we show that $E(P_0) \cup E(P_1) \cup \cdots \cup E(P_d) \subseteq E(T)$.
    If $u \to_{P_0} v$, then $u <_{P_0} v$, and $u <_0 v$ since $<_0$ is a linear extension of $P_0$, which means that $u \to_T v$.
    Now, if $u \to_{P_i} v$ for some $1 \le i \le d$, then $u >_0 v$ and $u <_i v$. This implies that $u$ and $v$ are not comparable in $P_0$, which in turn means that $u \to_T v$.
\end{proof}

\begin{corollary}
    For every integer $k$, the class of poset tournaments with dimension at most $k$ is $\dic$-bounded.
\end{corollary}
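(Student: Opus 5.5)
The plan is to chain Lemma~\ref{lem:dim2pod} with Corollary~\ref{cor:poly-chi-pod}. Fix $k$, and let $\mathcal{C}_k$ be the class of poset tournaments $T$ with $\dim(T)\le k$. Lemma~\ref{lem:dim2pod} gives $pod(T)\le \dim(T)+1\le k+1$ for every $T\in\mathcal{C}_k$, so $pod$ is uniformly bounded by $m=k+1$ on this class.

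Next, apply Corollary~\ref{cor:poly-chi-pod} (equivalently, Theorem~\ref{thm:pod-main}) with $m=k+1$. This gives $\dic(T)\le \diomega(T)^{k+1}$ for every $T\in\mathcal{C}_k$. Taking $f(x)=x^{k+1}$ shows that $\mathcal{C}_k$ is $\dic$-bounded, and in fact polynomially $\dic$-bounded, which is stronger than the statement claims.

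One point needs care. The dimension of a poset tournament is defined as a minimum over witnesses, and Lemma~\ref{lem:dim2pod} already uses a witness attaining $\dim(T)$, so no extra argument is required. The degenerate case $\dim(T)=0$ cannot cause trouble: a poset on at least two elements has dimension at least $1$, and if it did arise the bound would only become smaller. The whole argument is a two-line combination of earlier results, and I do not expect any step to be an obstacle.
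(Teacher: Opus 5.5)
Your proposal is correct and is exactly the argument the paper intends. The paper gives the corollary no separate proof, placing it directly after Lemma~\ref{lem:dim2pod}; it follows by combining $pod(T)\le \dim(T)+1\le k+1$ with Corollary~\ref{cor:poly-chi-pod} to get $\dic(T)\le\diomega(T)^{k+1}$, which, as you note, even gives polynomial $\dic$-boundedness.
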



\section{Using partial order decompositions to lower bound the clique number of tournaments}\label{sec:lower_bound}

The inequality $\dic \leq \diomega^{pod}$ can  be rewritten as $\diomega \geq \dic^{1/pod}$, and thus yields lower bounds on the clique number of tournaments when $\dic$ and $pod$ are known. Let us illustrate this with  two examples. 

Let $B_0$ be the one-vertex tournament, and for $k \geq 1$, let $B_k$ be the tournament obtained from three disjoint copies $A$, $B$, and $C$ of $B_{k-1}$ by adding all possible arcs from $A$ to $B$, from $B$ to $C$, and from $C$ to $A$, i.e. $A \Rightarrow B, B \Rightarrow C, C \Rightarrow A$. 
Computing the clique number, or even a decent approximation of the clique number of $B_k$ appears to be quite difficult, whereas obtaining lower bounds on the dichromatic number and upper bounds on the partial order decomposition number is much easier, yielding a useful lower bound on its clique number. We now explain how to bound both the dichromatic number and the partial order decomposition number of $B_k$. 

Let $TT_k$ be the transitive tournament on $k$ vertices, and let $\vec C_3$ be the directed triangle. Set $\mathcal B=\{TT_1,TT_2,\vec C_3\}^{subst}$. Observe that $B_k\in\mathcal B$ for every $k$. Since $TT_1$, $TT_2$, and $\vec C_3$ all have $pod$ at most $3$, Lemma~\ref{lem:compgraphsubs} implies that $pod(B_k)\le 3$ for every $k$. Hence $\diomega(B_k)\ge \dic(B_k)^{1/3}$.

Now, let us prove that $\dic(B_k)\ge \frac32\,\dic(B_{k-1})$. In any dicolouring of $B_k$, each colour can appear in at most two of the three copies of $B_{k-1}$, since otherwise choosing one vertex of that colour in each copy yields a monochromatic directed triangle. Since each copy of $B_{k-1}$ requires at least $\dic(B_{k-1})$ colours, it follows that $\dic(B_k)\ge \frac32\,\dic(B_{k-1})$. By induction, we obtain $\dic(B_k)\ge (3/2)^k$. Altogether we get the following. 

\begin{theorem}\label{thm:diomega_Bk}
For every integer $k$, $\diomega(B_k)\ge (3/2)^{k/3}$.  
\end{theorem}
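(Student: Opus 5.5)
The plan is to combine the two ingredients already assembled in the discussion preceding the statement: an upper bound $pod(B_k)\le 3$ and a lower bound $\dic(B_k)\ge (3/2)^k$. Feeding them into Theorem~\ref{thm:pod-main} in the form $\diomega(B_k)\ge \dic(B_k)^{1/pod(B_k)}$ gives the claim.

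\textbf{Bounding $pod(B_k)$.} First I will show $pod(B_k)\le 3$. For this I check directly that each of $TT_1$, $TT_2$ and $\vec C_3$ has $pod$ at most $3$.
\begin{itemize}
\item $TT_1$ has no arcs, so any partial orders work (for instance three empty ones).
\item $TT_2$ is a single arc, which is itself a partial order.
\item $\vec C_3$ has three arcs, and each single arc is a transitive acyclic digraph, so three parts suffice.
\end{itemize}
Since $pod$ is monotone upward in the number of allowed parts (we may pad with empty partial orders), Lemma~\ref{lem:compgraphsubs} applies with $m=3$ to the class $\{TT_1,TT_2,\vec C_3\}$. It gives $pod(T)\le 3$ for every $T\in\mathcal B$.

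\textbf{$B_k$ lies in $\mathcal B$.} Next I verify $B_k\in\mathcal B$ by induction. $B_0=TT_1$, and $B_k$ is obtained from $\vec C_3$ by substituting a copy of $B_{k-1}$ for each of its three vertices in turn. The resulting arcs $A\Rightarrow B\Rightarrow C\Rightarrow A$ match the substitution definition exactly.

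\textbf{Bounding $\dic(B_k)$.} I will establish $\dic(B_k)\ge(3/2)^k$ by induction, with base case $\dic(B_0)=1$. Fix an optimal dicolouring of $B_k$ with colour set $S$, and let $S_A,S_B,S_C$ be the sets of colours used on the three copies. Each has size at least $\dic(B_{k-1})$, since restricting the colouring to a copy gives a dicolouring of $B_{k-1}$.

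No colour lies in all three sets: one vertex of that colour from each of $A,B,C$ would form a directed triangle. Double counting then gives
\[
3\,\dic(B_{k-1})\le |S_A|+|S_B|+|S_C|\le 2|S|,
\]
so $\dic(B_k)\ge \tfrac32\dic(B_{k-1})$.

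\textbf{Conclusion.} Theorem~\ref{thm:pod-main} yields $\dic(B_k)\le \diomega(B_k)^{pod(B_k)}\le \diomega(B_k)^3$. The last inequality uses $\diomega\ge1$; if $pod(B_k)<3$ the bound $\diomega(B_k)^{pod(B_k)}\le\diomega(B_k)^3$ still holds. Taking cube roots gives $\diomega(B_k)\ge (3/2)^{k/3}$.

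There is no real obstacle here. The only points needing care are the monotonicity of the exponent when $pod(B_k)$ is smaller than $3$, and the fact that Lemma~\ref{lem:compgraphsubs} is stated for tournaments: all of $TT_1$, $TT_2$, $\vec C_3$ and the $B_k$ are tournaments, so it applies.
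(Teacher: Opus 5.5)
Your proposal is correct and follows essentially the same route as the paper: $pod(B_k)\le 3$ via Lemma~\ref{lem:compgraphsubs} applied to $\{TT_1,TT_2,\vec C_3\}$, $\dic(B_k)\ge (3/2)^k$ because no colour can appear in all three copies, and then Theorem~\ref{thm:pod-main}. Your explicit double count, the padding with empty partial orders, and the remark that $\diomega\ge 1$ just spell out details the paper leaves implicit.
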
 

Since $|V(B_k)|=3^k$, Theorem~\ref{thm:diomega_Bk} gives $\diomega(B_k) \geq |V(B_k)|^{1/8.13}$ and thus a positive answer to Conjecture~\ref{conj:Gutowski} as announced in the introduction. 

Next, we introduce a construction due to Nassar and Yuster \cite{nassar2019acyclic}, and show how to bound its clique number from below. For $t \in \mathbb{N}$, the tournament $G_t$ has vertex set $[t] \times [t]$, and we have $(i, j) \rightarrow (k, l)$ if and only if: 
\begin{itemize}
    \item $i \leq k$ and $j < l$; or
    \item $i < k$ and $j > l$; 
    \item $i > k$ and $j = l$. 
\end{itemize}
From this definition, it is immediate that $pod(G_t) \leq 3$, since each of the bullet points above is a partial order. In Lemma 3.1 (and the proof of Theorem 1.6) of \cite{nassar2019acyclic}, it is shown that no backedge graph of $G_t$ has an independent set of size $2t$. (In fact, the proof of Theorem 1.6 explicitly shows that every backedge graph of $G_t$ has a clique of size at least $|V(G_t)|^{1/8}/3$ and can be adapted to give a similar bound to ours. However, using $pod$ yields a much shorter proof.) Consequently, we have $\dic(G_t) \geq t^2 / (2t) = t/2$.  Since $pod(G_t) \leq 3$, we conclude the following. 
\begin{theorem} \label{thm:gt}
    For every $t \geq 1$, we have $\diomega(G_t) \geq \dic(G_t)^{1/3} = (t/2)^{1/3} \geq |V(G_t)|^{1/6}/2.$
\end{theorem}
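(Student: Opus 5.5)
The plan is to chain three facts: $pod(G_t)\le 3$, a lower bound $\dic(G_t)\ge t/2$ coming from the independence bound of Nassar and Yuster, and Theorem~\ref{thm:pod-main}, which we rearrange as $\diomega \ge \dic^{1/pod}$. The final step is a short computation turning $(t/2)^{1/3}$ into a bound in terms of $|V(G_t)| = t^2$.

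\textbf{Step 1: $pod(G_t)\le 3$.} We would write $A(G_t)=A(P_1)\cup A(P_2)\cup A(P_3)$, where $P_1,P_2,P_3$ are the relations given by the three bullet points. We then check that each $P_i$ is transitive and acyclic.
\begin{itemize}
\item $P_1$ is the relation $i\le k,\ j<l$. It is a strict product-type order: transitivity follows coordinatewise, and acyclicity follows because $j$ strictly increases along every arc.
\item $P_2$ is the relation $i<k,\ j>l$. This is again a product of strict orders, so the same argument applies.
\item $P_3$ is the relation $i>k,\ j=l$. It is a disjoint union of transitive tournaments, one on each column.
\end{itemize}
We also need that each pair of vertices receives exactly one arc, so that $G_t$ really is a tournament. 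This is a short case check on the signs of $i-k$ and $j-l$, and it is not strictly needed for the bound.

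\textbf{Step 2: $\dic(G_t)\ge t/2$.} We invoke Lemma 3.1 of \cite{nassar2019acyclic}: no backedge graph of $G_t$ has an independent set of size $2t$. Take a colour class of a $\dic(G_t)$-dicolouring. It induces an acyclic subtournament, i.e. a transitive tournament. Ordering $V(G_t)$ along a topological order of that class, the class is an independent set in the resulting backedge graph. Hence every colour class has size less than $2t$, and so $\dic(G_t)\ge t^2/(2t)=t/2$.

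\textbf{Step 3: conclude.} Theorem~\ref{thm:pod-main} gives $\dic(G_t)\le \diomega(G_t)^{3}$, so $\diomega(G_t)\ge (t/2)^{1/3}$. Using $|V(G_t)|=t^2$, we get $t^{1/3}=|V(G_t)|^{1/6}$. Since $2^{1/3}\le 2$, this yields $(t/2)^{1/3}\ge |V(G_t)|^{1/6}/2$.

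The only nontrivial ingredient is the cited independence bound, which we import from \cite{nassar2019acyclic}. The main point needing care is translating "no backedge graph has an independent set of size $2t$" into "no acyclic set of size $2t$". This holds because acyclic vertex sets are exactly the sets that are independent in some backedge graph.
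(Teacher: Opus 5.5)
Your proposal is correct and follows the same route as the paper. You get $pod(G_t)\le 3$ from the three bullet relations, $\dic(G_t)\ge t^2/(2t)=t/2$ from the Nassar--Yuster bound on independent sets in backedge graphs, and then conclude with Theorem~\ref{thm:pod-main} and $|V(G_t)|=t^2$. Your checks that each bullet relation is transitive and acyclic, and that acyclic sets are exactly the sets independent in some backedge graph, fill in details the paper leaves implicit, and you rightly read the middle ``$=$'' in the statement as the inequality $\dic(G_t)\ge t/2$.
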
 This construction gives another answer to Conjecture \ref{conj:Gutowski}. We are not aware of an explicit construction achieving a better exponent than $1/6$. 

A related question is the following. 

\begin{question}
    Let $f_{\diomega}(n)$ denote the largest possible value of the clique number of an $n$-vertex tournament. Can we determine $f_{\diomega}(n)$ at least asymptotically?
\end{question}

At an earlier stage, we were led to study the families $(B_k)_{k\ge 0}$ and $(G_t)_{t \geq 0}$ as possible extremal families for estimating $f_{\diomega}(n)$.  We later realized, however, that a construction of Alon, Pach, and Solymosi~\cite[Theorem~1.3]{AlonPachSolymosi2001}  gives stronger bounds on $f_{\diomega}(n)$, placing it between $n^{1/3}/\log^2 n$ and $e n^{1/2}$. Nevertheless, the families $(B_k)_{k \geq 0}$ and $(G_t)_{t \geq 0}$ remain natural explicit families in this context, and it is still conceivable that the true values of $\diomega(B_k)$ and $\diomega(G_t)$  are much larger than the lower bound obtained in Theorems~\ref{thm:diomega_Bk} and \ref{thm:gt}. Note also that the construction of Alon, Pach, and Solymosi relies on probabilistic tools, and therefore does not answer the conjecture of Gutowski and Rams.

\section{Miscellaneous results and open questions}\label{sec:open_questions}

We start this section with two more results on the partial order decomposition number of digraphs and tournaments. We first prove that acyclic digraphs can have arbitrarily large $pod$. 
\begin{theorem}
    For every positive integer $k$, there exists an acyclic digraph $D$ such that $pod(D) \geq k$.
\end{theorem}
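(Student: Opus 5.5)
We reuse the line-digraph argument from Theorem~\ref{thm:notkcomp}, replacing the tournament $T_n$ by the acyclic digraph $D_n = UG$-free version of the double line digraph of the transitive tournament. Concretely, let $D_n = \arrowL(TT_n)$: its vertices are the pairs $(i,j)$ with $1 \le i < j \le n$, and its arcs are $(i,j)\to(j,k)$ for $i<j<k$. This digraph is acyclic, since along any arc the first coordinate strictly increases. We aim to show that $pod(D_n) \ge k$ once $n \ge 2^{2^k}$.

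\emph{Key observation.} Consider any directed path of length two in $D_n$, namely $(a,b)\to(b,c)\to(c,d)$ with $a<b<c<d$. The pair $(a,b),(c,d)$ is non-adjacent in $D_n$, because $b \neq c$. Hence, in any partial order decomposition $P_1,\dots,P_m$ of $D_n$, the two arcs of this path cannot lie in the same $P_i$. Indeed, transitivity of $P_i$ would force the arc $(a,b)\to(c,d)$ into $A(P_i)\subseteq A(D_n)$, which is absent. (Each $P_i$ must have arc set contained in $A(D_n)$ by the definition of a decomposition.)

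\emph{Colouring.} As in the proof of Theorem~\ref{thm:notkcomp}, define $\phi$ to send each arc of $D_n$ to the least index $i$ with that arc in $A(P_i)$. The arcs of $D_n$ are naturally indexed by the triples $(a,b,c)$ with $a<b<c$, and two arcs indexed by $(a,b,c)$ and $(b,c,d)$ receive distinct colours by the observation above. Hence $\phi$ is a proper $m$-colouring of $G_n \cong UG(\arrowL(\arrowL(TT_n)))$. Theorem~\ref{thm:logcoloring} then gives
\[
m \ge \chi(G_n) \ge \log_2\log_2 n \ge k,
\]
exactly as in the earlier proof.

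The only point needing care, and the one I expect to be the main obstacle, is confirming the non-adjacency of $(a,b)$ and $(c,d)$ in both directions. An arc $(c,d)\to(a,b)$ would require $d=a$, and an arc $(a,b)\to(c,d)$ would require $b=c$; both fail since $a<b<c<d$. The rest of the argument is a verbatim reuse of the proof of Theorem~\ref{thm:notkcomp}.
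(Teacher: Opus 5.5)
Your proof is correct and is essentially the paper's argument: colour each arc by the first partial order containing it, observe that the two arcs of a directed $2$-path with non-adjacent ends must receive different colours, and apply the Erd\H{o}s--Hajnal/Harner--Entringer bound to $UG(\arrowL(D))$. The only difference is the choice of $D$. The paper takes an acyclic orientation of an arbitrary triangle-free graph with chromatic number $2^k$ and applies the bound once. You instead use the explicit shift digraph $\arrowL(TT_n)$, which is itself an acyclic orientation of a triangle-free graph, at the cost of applying the bound twice and taking $n \ge 2^{2^k}$.
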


\begin{proof}
    Let $G$ be a triangle-free graph with chromatic number $2^k$, and let $D$ be an acyclic orientation of $D$. Then given a partial order decomposition $D = D_1 \cup \dots \cup D_m$, colour each arc $a \in A(D)$ with the smallest $i$ such that $a \in D_i$. Note that two arcs forming a directed three-vertex path must receive different colours since $UG(D)$ is triangle-free. Thus, this colouring is in fact a proper colouring of $UG(\arrowL(D))$. But this implies by Theorem \ref{thm:logcoloring} that $m \geq \chi(UG(\arrowL(D))) \geq \log_2(\chi(UG(D))) = \log_2(2^k) = k$. Thus, every partial order decomposition of $D$ requires at least $k$ partial orders, so $pod(D) \geq k$.
\end{proof}

Let $T$ be a tournament. A \emph{dominating set} of $T$ is a set of vertices $S$ such that $V(T) = S \cup N^+(S)$, and the \emph{domination number $dom(T)$} of $T$ is the size of a smallest dominating set of $T$. We prove that a tournament with bounded $pod$ also has bounded domination number. 

\begin{theorem}
     There is a function $f$ such that, for every tournament $T$,  $dom(T) \leq f(pod(T))$.
\end{theorem}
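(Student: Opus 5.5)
The plan is to combine the standard LP-duality bound on fractional domination in tournaments with the $\varepsilon$-net theorem. This reduces the statement to bounding the VC-dimension of the closed in-neighbourhoods of $T$ by a function of $pod(T)$. That last bound is the part I do not yet have.

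\emph{Step 1 (fractional domination).} Let $M$ be the $V(T)\times V(T)$ matrix with $M_{uv}=1$ if $u\to_T v$, $M_{uv}=-1$ if $v\to_T u$, and $M_{uu}=0$. It is antisymmetric, so the zero-sum game it defines has value $0$. By von Neumann's minimax theorem there is a probability distribution $p$ on $V(T)$ such that, for every $v$, the quantity $\sum_u p(u)M_{uv}$ is nonnegative. This means $p(N^-(v))\ge p(N^+(v))$, and hence $p(N^-[v])\ge 1/2$ for every vertex $v$. So the set system $\mathcal F=\{N^-[v]:v\in V(T)\}$ has fractional transversal number at most $2$. A set $S$ dominates $T$ exactly when it meets every member of $\mathcal F$.

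\emph{Step 2 ($\varepsilon$-nets).} By the Haussler--Welzl $\varepsilon$-net theorem with $\varepsilon=1/2$, if $\mathcal F$ has VC-dimension at most $d$, then there is a set of size $O(d\log d)$ meeting every set of $p$-measure at least $1/2$. Such a set is a dominating set of $T$. So it suffices to show that the VC-dimension of $\mathcal F$ is bounded by a function $g(m)$, where $m=pod(T)$. This would give $f(m)=O(g(m)\log g(m))$.

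\emph{Step 3 (bounding the VC-dimension; main obstacle).} Fix a partial order decomposition $P_1,\dots,P_m$ of $T$, and suppose a set $X$ of size $N$ is shattered by $\mathcal F$. I would argue by Ramsey-type extraction.
\begin{itemize}
\item First, pass to a transitive subtournament $Y=\{y_1,\dots,y_t\}$ of $T[X]$ with $y_a\to y_b$ for $a<b$. This is possible with $t\ge\log_2 N$, and $Y$ is still shattered.
\item For each $S\subseteq Y$, let $v_S$ be a vertex with $N^-[v_S]\cap Y=S$.
\item Colour each arc between $Y$ and $v_S$ by the least index $i$ with that arc in $P_i$.
\end{itemize}
The key local constraint is the following. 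Suppose $a<b$, $y_b\to v_S$ and $v_S\to y_a$. If both arcs were in the same $P_i$, transitivity would give $y_b\to y_a$, contradicting the transitivity of $Y$ in the other direction. So such a pair of arcs must receive distinct colours.

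Choosing $S$ to be an alternating pattern, such as every other element of $Y$, gives many such constrained pairs around a single $v_S$. A single vertex can satisfy all of them with just two colours, so this alone gives no contradiction. I therefore expect to need many vertices $v_S$ simultaneously, together with the arcs of $T$ among the $v_S$ themselves. The approach I would try first is as follows.
\begin{itemize}
\item Apply Ramsey's theorem to the colour patterns of the $v_S$ over nested families of alternating sets $S$, so that all the chosen $v_S$ behave identically with respect to the colours.
\item Use the tournament arc between two such vertices $v_S$ and $v_{S'}$. Combined with arcs to a suitable $y\in S\setminus S'$ (or $S'\setminus S$), it closes a directed triangle $v_S\to v_{S'}\to y\to v_S$, or the reverse triangle.
\item By pigeonhole, two of its three arcs lie in a common $P_i$. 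Transitivity of $P_i$ then forces the third arc in the wrong direction, a contradiction once $t$ is large compared with $m$.
\end{itemize}
I expect the main work to be arranging the pigeonhole so that the two same-colour arcs are consecutive along the triangle. A natural way to enforce this is an iterated "line digraph" doubling-type argument in the spirit of Theorem~\ref{thm:logcoloring}, which would give $g(m)$ of tower type in $m$. This is acceptable, since only some function $f$ is required.
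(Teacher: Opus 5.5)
\textbf{Step 3 is false, so the proposal has a genuine gap.} Your Steps 1 and 2 are sound: they are the standard argument that tournaments whose closed in-neighbourhoods have bounded VC-dimension have bounded domination number. The problem is Step 3, because the VC-dimension of $\mathcal F=\{N^-[v]\}$ is not bounded by any function of $pod(T)$, not even when $pod(T)\le 3$.

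Here is a counterexample. Let $X=\{x_1,\dots,x_d\}$ and $V=\{v_S : S\subseteq X\}$ each induce a transitive tournament. For $x\in X$, put $x\to v_S$ if $x\in S$ and $v_S\to x$ otherwise. Then $N^-[v_S]\cap X=S$, so $X$ is shattered. Now look at three arc sets:
the arcs from $X$ to $V$ form a partial order, since every vertex is a source or a sink in it;
the arcs from $V$ to $X$ form a partial order for the same reason;
the disjoint union of the two transitive tournaments is a partial order.
Hence $pod(T)\le 3$. This is just the case $k=2$ of Corollary~\ref{cor:boundedchrom}: dichromatic number $2$ already allows unbounded VC-dimension.

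This is exactly your configuration with $Y=X$, and it shows where your sketch breaks. A triangle $v_S\to v_{S'}\to y\to v_S$ uses one arc from each of the three posets. So the pigeonhole claim that two of its arcs share a $P_i$ holds only for $m\le 2$, and no Ramsey cleaning of colour patterns can recover it.

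\textbf{The missing idea.} You should exploit transitivity along monochromatic paths, not bound neighbourhood complexity. The paper uses the Erd\H{o}s--Sands--Sauer--Woodrow conjecture, proved by Bousquet, Lochet and Thomass\'e: every $k$-arc-coloured tournament has a set $S$ of at most $g(k)$ vertices such that every vertex is reached from $S$ by a monochromatic directed path.

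Colour each arc of $T$ by the set of indices $i$ with the arc in $P_i$; colouring by the least such $i$ works equally well. All arcs of one colour lie in a single poset, which is transitive. So a monochromatic directed path from $s\in S$ to $v$ yields the arc $s\to v$ in $T$. Thus $S$ is a dominating set, and $f(m)=g(2^m-1)$ (or $g(m)$ with the least-index colouring) works.
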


\begin{proof}
    The Erd\H{o}s-Sands-Sauer-Woodrow-Bousquet-Lochet-Thomassé Theorem~\cite{Erdos_Sauer_Woodrow} states that a $k$-arc coloured tournament has a set $S$ of at most $g(k)$ vertices such that, for every  $v \in V(T)$, there is a monochromatic directed path from $S$ to $v$. By definition of $pod$, a tournament has an arc covering by $pod(T)$ posets $P_1,\dots,P_\ell$. We colour each arc $(u,v)$ of $T$ by the nonempty subset $X \subseteq [\ell]$ such that $(u,v) \in A(P_i)$ for every $i \in X$. Each colour class induces a poset as an intersection of posets.
    We deduce a $(2^{pod(T)}-1)$-arc colouring where each colour class is a poset, implying that $S$ is a dominating set.
\end{proof}

We end with some open questions on tournaments and partial order decompositions.

\begin{question}
Is it true that there is a function $f$ such that if tournament $T$ has  $pod(T) \geq f(k,s)$, then it contains a subtournament of size at most $k$ and $pod$ at least $s$? 
\end{question}

\begin{question}
For which tournaments $H$ do $H$-free tournaments have bounded pod?
\end{question}

\begin{question}
    What is the complexity of computing or approximating $pod(T)$ for a tournament $T$?
\end{question}

\bibliographystyle{plain}
\bibliography{bib}

\end{document}